\newcommand{\Q}{{\mathbb Q}}
\newcommand{\F}{{\mathbb F}}
\newcommand{\Z}{{\mathbb Z}}
\newcommand{\cF}{\mathcal{F}}
\newcommand{\cG}{\mathcal{G}}
\newcommand{\ri}{\mathcal{O}}
\DeclareMathOperator{\gal}{Gal}
\DeclareMathOperator{\vol}{vol}
\DeclareMathOperator{\loc}{Loc}
\DeclareMathOperator{\ind}{c-Ind}
\DeclareMathOperator{\ord}{ord}
\DeclareMathOperator{\Hom}{Hom}
\newcommand{\GL}{\mathbf {GL}}
\newcommand{\bG}{\mathbf{G}}
\newcommand{\bT}{\mathbf {T}}
\newcommand{\bB}{\mathbf {B}}
\newcommand{\bR}{\mathbf{R}}
\newcommand{\can}{\mathrm{can}}
\newcommand{\ff}{{\mathfrak f}}
\newcommand\lef{\mathbb L}
\newcommand\cE{{\mathcal E}}
\newcommand\mot{\mathrm{mot}}
\newcommand\ad{\mathrm{ad}}
\newcommand{\Ner}[1]{\mathcal{#1}}
\newcommand{\NerC}[1]{\mathcal{#1}^\circ}
\def\llp{\mathopen{(\!(}}
\def\rrp{\mathopen{)\!)}}
\theoremstyle{plain}
\newtheorem{thm}{Theorem}
\newtheorem{theorem}[thm]{Theorem}
\newtheorem{lem}[thm]{Lemma}
\newtheorem{cor}[thm]{Corollary}
\newtheorem{prop}[thm]{Proposition}
\theoremstyle{definition}
\newtheorem{rem}[thm]{Remark}
\newtheorem{example}[thm]{Example}
\title[]{The canonical measure on a reductive $p$-adic group is motivic}
\author{Julia Gordon and David Roe}
\subjclass[2010]{22E50 (primary); 20G25, 14E18, 03C60 (secondary)}
\keywords{N\'eron models, Denef-Pas language, algebraic tori, canonical measure}
\begin{document}

\begin{abstract}  Let $G$ be a connected reductive group over a non-Archimedean local field. 
We prove that its parahoric subgroups are definable in the Denef-Pas language, which is a
first-order language of logic used in the theory of motivic integration developed by Cluckers and Loeser.
The main technical result is the definability of the connected component of the N\'eron model
of a tamely ramified algebraic torus.  As a corollary, we prove that the canonical Haar measure on $G$,
which assigns volume $1$ to the particular \emph{canonical} maximal parahoric defined by
Gross in \cite{gross:97a}, is motivic. This result resolves a technical difficulty that arose in
\cite{cluckers-gordon-halupczok:14b} and \cite{shin-templier:15a}*{Appendix B} and permits a simplification of some of the proofs in those articles. 
It also allows us to show that formal degree of a compactly induced representation is a
motivic function of the parameters defining the representation. 
\end{abstract}
\maketitle

\section{Introduction}
The goal of this paper is to complete a technical step in the 
definable formulation of 
the representation theory of $p$-adic groups, a project started by T.C. Hales in 1999. 
Here, the word ``definable'' is as in the theory of motivic integration developed by R. Cluckers and F. Loeser \cite{cluckers-loeser:08a}.

Specifically, we will prove that the parahoric subgroups of a connected reductive $p$-adic group
are definable using the Denef-Pas language, which is the language used in the Cluckers-Loeser
theory of motivic integration and its applications to representation theory of $p$-adic groups.
As a consequence, we prove that the canonical Haar measure on a connected reductive group 
(which assigns the volume $1$ to the canonical parahoric subgroup constructed by B. Gross \cite{gross:97a}) is motivic.

For unramified groups, this statement has been known for a while \cite{cluckers-hales-loeser}.
For ramified groups, the definition of the canonical smooth model of $\bG$ relies on the
N\'eron model of a maximal torus in $\bG$, which does not behave well with respect to Galois descent.
The main technical result of this paper is that the connected component of the N\'eron model
of a tamely ramified torus is definable in the language of Denef-Pas. The difficulty in proving this result
is caused by the fact that ``taking the connected component'' is not an operation that can be easily
described by first-order logic. 

This paper is split into two sections, the first leading up to Proposition \ref{prop:NerCdefinable},
which shows that the connected component of the N\'eron model of a torus is definable,
and the second giving applications to canonical measures and formal degrees.

We begin Section \ref{sec:tori} by setting up notation and briefly reviewing the Denef-Pas language.  In order
to give formulas defining $\bT(F)$ and $\NerC{T}(\ri_F)$  (where $\NerC{T}$ denotes the connected
component of the N\'eron model of the torus $\bT$) in this language, we need to parameterize the possible tori $\bT$.
In Section \ref{sub:fixedchoices}, we describe the choices that can be made without reference to variables in $F$,
such as fixing an abstract Galois group $\Gamma$ and a lattice with action of $\Gamma$ which will play the role of the cocharacter lattice of $\bT$.
Section \ref{sub:def_tori} then parameterizes field extensions with Galois group $\Gamma$, resulting in a parameterization
of tori over $F$.  Finally, in Section \ref{sub:NerCdefinable} we show that $\NerC{T}(\ri_F)$ is a definable subgroup of $\bT(F)$.
In Section 3 we prove two easy corollaries mentioned above, namely, that the canonical measure is motivic,
and in a definable family of compactly-induced irreducible representations, formal degree is motivic. 

{\bf Acknowledgment.} We thank Loren Spice for the discussion of formal degree, and the referee for multiple helpful suggestions and corrections.

\section{Tori} \label{sec:tori}
We will use the notions of definable sets and definable functions, which will always refer to the Denef-Pas language. 
Formulas in the Denef-Pas language can have variables of three \emph{sorts}: valued field (which will be denoted by \emph{VF}),
residue field (denoted by \emph{RF}) and the value group. Even though we will often be working with ramified extensions,
we always start with a local field $F$ with normalized valuation, so the value group is $\Z$
(the \emph{VF}-variables will range over $F$, and so their valuations will be in $\Z$).
Formulas in the Denef-Pas language can be interpreted given a choice of a valued field \emph{together with a uniformizer}. 
We refer the reader to \cite{cluckers-gordon-halupczok:14c} and references therein for the definitions of the Denef-Pas language, definable sets, etc. 

For us, $F$ will always be a non-Archimedean local field: either $\F_q\llp t\rrp$ or a finite extension of $\Q_p$.
As a consequence of the definition of a definable set, all statements in this paper will hold for any $F$ of sufficiently large residue characteristic $p$, 
though we will give no effective bound on $p$. 
Given an integer $M>0$, we will denote by $\loc_M$ the collection of non-Archimedean local fields
with residue characteristic greater than $M$. 

For a local field $F$, we will denote its ring of integers by $\ri_F$, its residue field by $k_F$,
and let $q_F=\# k_F$. The symbol $\varpi$ or $\varpi_F$ will stand for the uniformizer of the valuation on $F$. 
A formula in the Denef-Pas language  with $n$ free \emph{VF}-variables, $m$ free \emph{RF}-variables, and $r$ free 
$\Z$-variables 
defines a subset of $F^n\times k_F^m \times \Z^r$. 
We will denote the definable set $F^n\times k_F^m \times \Z^r$ itself by
$\mbox{\emph{VF}}^n\times \mbox{\emph{RF}}^m\times \Z^r$.
In earlier works on motivic integration this set was denoted by $h[n,m,r]$. 
We will talk about definable subsets of $\mbox{\emph{VF}}^n\times \mbox{\emph{RF}}^m\times \Z^r$,
meaning the subsets defined by Denef-Pas formulas with the right number of free variables of each sort, as above. 
For a definable subset $X\subset \mbox{\emph{VF}}^n\times \mbox{\emph{RF}}^m\times \Z^r$, and given a local field $F$,  
we will denote by $X(F)$ the \emph{specialization} of $X$ in $F$, i.e., the subset of
$F^n\times k_F^m\times \Z^r$ obtained by interpreting in $F$ all the formulas defining the set $X$.\footnote{This
is the notation used in \cite{gordon-hales:15a}; note that traditionally in the motivic integration literature,
the {specialization} of a definable set $X$ was denoted by $X_F$, but this notation generates too many subscripts for us.} 

We start by setting up the framework for working with tori in the Denef-Pas language,
following \cite{cluckers-hales-loeser}, \cite{cluckers-gordon-halupczok:14b} and \cite{gordon-hales:15a}.

\subsection{Fixed choices}\label{sub:fixedchoices}

As in \cite{gordon-hales:15a}*{\S 2.1}, we begin by outlining our \emph{fixed choices},
which are made before writing any formulas in the Denef-Pas language.  
For each \emph{fixed choice} (which will be completely field-independent), we will further describe
a definable set of parameters (which will then be allowed to range over a valued field $F$,
its residue field $k_F$ or $\Z$), in such a way that each tuple of parameters gives rise to an algebraic torus
defined over $F$, and all isomorphism classes of algebraic $F$-tori arise via this construction.

We fix a finite group $\Gamma$ and a normal subgroup $I \unlhd \Gamma$,
as well as enumerations of their elements $\Gamma = \{\sigma_1, \dots, \sigma_m\}$ and
$I = \{\sigma_1, \dots, \sigma_e\}$.  We make the convention that $\sigma_1 = 1$ and $\sigma_m$
generates $\Gamma / I$.\footnote{Note that we do not assume that $\sigma_m$ is the Frobenius element, since $p$ is not fixed.}
When we eventually construct a torus $\bT$ from the fixed choices
and corresponding parameters, these groups will play the roles of $\gal(E/F)$
and its inertia subgroup, where $E$ is the splitting field of $\bT$.

In order to define a torus $\bT$, we will use the equivalence of categories between $F$-tori and
free $\Z$-modules with a Galois action.  To this end, we fix a positive integer $n$ and an injective homomorphism
\begin{equation} \label{eq:theta}
\theta : \Gamma \hookrightarrow \GL_n(\Z),
\end{equation}
which gives $\Z^n$ an action of $\Gamma$.  The $\Gamma$-module $X$ defined by $\theta$ will play the role of $X_\ast(\bT)$.

Finally, we fix a resolution of $X$ by an induced $\Gamma$-module $Y$, i.e. a surjective map
$Y \to X$ where $Y$ has a basis permuted by $\Gamma$ (cf. \cite{brahm:thesis}*{Satz 0.4.4}).
To specify $Y$, we just give the matrix for the map $Y \to X$ of free abelian groups,
together with the matrices giving $\gamma : Y \to Y$ for $\gamma \in \Gamma$.
This resolution will allow us to definably cut out the connected component of the N\'eron model inside $\bT(F)$.

\subsection{Parameterizing field extensions and tori}\label{sub:def_tori}
 
We encode field extensions in the same way as in \cite{cluckers-gordon-halupczok:14b}.
Namely, we parameterize Galois extensions $E/F$ with $\gal(E/F) \cong \Gamma$
and realize all tori over $F$ that split over $E$ with cocharacter lattice $X$.
This parameterizes such tori as members of a family of definable sets,
for all $F$ of sufficiently large residue characteristic. 

We will write $L$ for the maximal unramified subextension of $E/F$.  In order to encode the data of the extension tower $E/L/F$,
we let $f=m/e$ and introduce parameters $b_0,\dots, b_{f-1}$, ranging over $\ri_F$.
We set $b(x)=x^f+b_{f-1}x^{f-1}+ \dots + b_0$. 
Similarly, we introduce parameters $c_0, \dots, c_{e-1}$, ranging over $L$
(i.e., each is given by an $f$-tuple of elements of $F$) and set $c(y) = y^e + c_{e-1}y^{e-1} + \dots + c_0$.

We impose the following conditions on these parameters, all of which are definable by formulas in the Denef-Pas language. 
\begin{enumerate}
\item The reduction of $b(x)$ modulo $\varpi_F$ is irreducible over $k_F$. 
This ensures that $F[x]/(b(x))$ is a degree $f$ unramified field extension of $F$. 
We denote this extension by $L$, and once and for all fix an identification with $F^f$ as 
an $F$-vector space. 
\item The polynomial $c(x)$ is Eisenstein: $\ord_L(c_0) = 1$ and $\ord_L(c_i) \ge 1$ for all $i$.
We further assume that the resulting extension $E = L[x]/(c(x))$ is Galois over $F$.
We fix an identification of $E$ with $L^e$ as $L$-vector spaces, and thus with $F^m$ as $F$-vector spaces. 
\item The field automorphisms of $E$ over $F$, as specified by $m \times m$ matrices over $F$, form a group isomorphic to $\Gamma$.
We will write $\sigma_i$ for the matrix as well as the corresponding element of $\Gamma$.
\item The automorphisms $\sigma_1, \dots, \sigma_e$ fix $L$, and the restriction of $\sigma_m$ to $L$ has order $f$.
\end{enumerate}
We denote by $\cE_\Gamma$ the space of parameters $(b_0, \dots, b_{f-1}, c_0, \dots, c_{e-1}, \sigma_1, \dots, \sigma_m)$ with these properties,
thought of as a definable subset of some large affine space over $F$.
For each local field $F$, every element of $\cE_\Gamma$ gives rise to a tower of field extensions $E/L/F$
with $\gal(E/F)$ isomorphic to $\Gamma$ and satisfying all the above conditions.
The homomorphism $\theta$ of \eqref{eq:theta} then defines a torus $\bT$ over $F$ with cocharacter lattice $X$ that splits over $E$.
More precisely, the set $E^\times \otimes X$ can be encoded as an open and definable subset
of an affine space over $F$ depending only on the fixed choices $X$ and $m$.
The group $\Gamma$ acts on $E^\times$ by means of the matrices $\sigma_i$ and on $X$
via the fixed choice $\theta$, and thus it acts on $E^\times \otimes X$ as well. 
This action is definable, in the sense that every element of $\Gamma$ acts by a definable map, and therefore the set
$\bT(F) = \bT(E)^\Gamma$ is definable as well.

Note that different parameters in $\cE_\Gamma$ may yield isomorphic extensions,
but that every isomorphism class of $E/F$ with Galois group $\Gamma$
arises from some element of $\cE_\Gamma$.  Moreover, as $\theta$ ranges over all homomorphisms
$\Gamma \hookrightarrow \GL_n(\Z)$,\footnote{There are infinitely many choices of $\theta$
but we never quantify over them; instead, we work with each such fixed choice separately.}
all possible cocharacter lattices of tori of dimension $n$ appear. Therefore every $F$-torus
arises via this construction, since it is determined by its splitting field and cocharacter lattice viewed as a $\Gamma$-module. 
\begin{example}
Suppose $\Gamma = I = \Z / 2\Z$ and $n=1$; note that $\theta$ is uniquely determined in this case.
For $p > 2$, the two ramified quadratic extensions of $F$ appear as members of the same family,
one for the polynomial $c(x) = x^2-\varpi$, another for the polynomial $c(x) = x^2-\epsilon\varpi$,
where $\epsilon \in \ri_F^\times$ is a non-square.
Recall that the interpretation of formulas in the Denef-Pas language depends not just on the field,
but also on the choice of uniformizer. In this case, a different choice of the uniformizer
would swap these two extensions, but both would still appear.
The torus $\bT$ is the the one-dimensional unitary group that splits over $E$.
\end{example}
\begin{rem}
While we have not constrained $\Gamma$ in such a way that $E/F$ is automatically tame,
if $\Gamma$ is not the semidirect product of two cyclic groups then
$\cE_\Gamma$ will be empty for large enough residue characteristic.  In this case, from the point of view
of motivic integration, $\cE_\Gamma$ is indistinguishable from the empty set.
\end{rem}

\subsection{The identity component of the N\'eron model} \label{sub:NerCdefinable}

Now that we have parameterized $E/L/F$ and $\bT$ and shown that $\bT(F)$ definable, we may prove the main technical result of the paper.
Write $\Ner{T}$ for the N\'eron model of $\bT$ (cf. \cite{bosch-lutkebohmert-reynaud:NeronModels}*{Ch. 10});
this is a model for $\bT$ over $\ri_F$ with the property that $\Ner{T}(\ri_F) = \bT(F)$.  Let $\NerC{T}$ be its identity component.

\begin{prop} \label{prop:NerCdefinable}
The subset $\NerC{T}(\ri_F) \subseteq \bT(F)$ is definable.
\end{prop}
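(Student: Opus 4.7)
My plan is to use the resolution $Y \twoheadrightarrow X$ fixed in Section \ref{sub:fixedchoices} to present $\NerC{T}(\ri_F)$ as the image, under a definable map, of a manifestly definable subgroup of the induced torus $\bS$ whose cocharacter lattice is $Y$. Definability of $\NerC{T}(\ri_F)$ inside $\bT(F)$ will then follow from Denef--Pas elimination of valued-field quantifiers.

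To carry this out, I would first construct $\bS$ as the $F$-torus with $X_\ast(\bS) = Y$ by applying the construction of Section \ref{sub:def_tori} to $Y$ in place of $X$: the set $\bS(F) = (Y \otimes E^\times)^\Gamma$ is then definable, and the $\Gamma$-equivariant surjection $Y \to X$ (which is a fixed integer matrix) induces a definable homomorphism $\pi_F : \bS(F) \to \bT(F)$. Since $Y$ is a permutation $\Gamma$-module, $Y \cong \bigoplus_i \Z[\Gamma/H_i]$ and $\bS \cong \prod_i \res_{K_i/F}\mathbb{G}_m$, where each $K_i = E^{H_i}$ is cut out inside $E \cong F^m$ as the common kernel of the matrices $\sigma_j - 1$ for $\sigma_j \in H_i$; in particular $\ri_{K_i}^\times$ is definable. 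Crucially, each $\res_{K_i/F}\mathbb{G}_m$ is quasi-trivial, so its N\'eron model $\res_{\ri_{K_i}/\ri_F}\mathbb{G}_m$ is smooth and connected in our tame setting, and consequently
\[
\NerC{S}(\ri_F) \;=\; \prod_i \ri_{K_i}^{\times}
\]
is a definable subset of $\bS(F)$, cut out by nonnegative valuations of components together with a unit condition. By Denef--Pas quantifier elimination for valued-field variables, the image $\pi_F(\NerC{S}(\ri_F)) \subseteq \bT(F)$ is then also definable.

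It remains to prove the equality $\pi_F(\NerC{S}(\ri_F)) = \NerC{T}(\ri_F)$, which is the main obstacle. The inclusion $\subseteq$ is formal: the exact sequence $1 \to \bR \to \bS \to \bT \to 1$ with $\bR = \ker\pi$ extends to a morphism $\Ner{S} \to \Ner{T}$ of N\'eron models, and since $\NerC{S}$ is connected the image of this morphism must lie in $\NerC{T}$. The reverse inclusion is the delicate one: given $t \in \NerC{T}(\ri_F)$ I must exhibit a preimage in $\NerC{S}(\ri_F)$, not merely in $\bS(F)$. I would attack this by invoking tameness (so that $\Gamma$ acts on $\ri_E^\times$ through a prime-to-$p$ quotient on the residue-field units) to annihilate the classes in $H^1(\Gamma, \bR(\ri_E))$ and $H^1(F, \bR)$ that obstruct first an arbitrary lift and then an integral adjustment; the vanishing of the latter class is precisely the statement that $t$ has trivial image in $\pi_0(\Ner{T}_{k_F})$. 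This cohomological input, classical but requiring care in the ramified case, is what makes the chosen resolution $Y \to X$ a workable tool for cutting out $\NerC{T}$, and once it is in hand the three definability observations above deliver the proposition.
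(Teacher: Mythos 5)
Your overall strategy — use the fixed resolution $Y\twoheadrightarrow X$ to produce an induced torus $\bS$ with a definable surjection onto $\bT$, identify a definable subgroup of $\bS(F)$ whose image characterizes $\NerC{T}(\ri_F)$, and then invoke Denef--Pas elimination of valued-field quantifiers — is the same as the paper's, and the ingredients (definability of $\ri_{K_i}^\times$, of $\bS(F)$, and of the map $\pi$) are all correct. The difference, and it matters, is in how you encode the condition of lying in the identity component.

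The paper characterizes $\NerC{T}(\ri_F)$ as $\ker(w_\bT)$, where $w_\bT\colon \bT(F)\to X_I$ is the Kottwitz homomorphism (after first reducing to the totally ramified case by taking $\gal(L/F)$-invariants, using that N\'eron models commute with unramified base change). Writing $\alpha\colon \bS(F)\to\bT(F)$ for the map induced by $Y\to X$ and $\beta\colon \bS(F)\to X_I$ for the composite of $w_\bS$ with $Y_I\to X_I$, the paper uses the commuting square from Kottwitz's (7.2.6) to observe that $t\in\NerC{T}(\ri_F)$ if and only if $\exists\,s\in\bS(F)$ with $\alpha(s)=t$ and $\beta(s)=0$. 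Crucially, $\beta(s)=0$ is a strictly weaker condition than $s\in\NerC{S}(\ri_F)$ (it only asks that $w_\bS(s)$ die in $X_I$, not that it vanish in $Y_I$), so the only surjectivity needed is $\alpha\colon\bS(F)\to\bT(F)$, which is part of the cited diagram, and the only explicit formula needed is the elementary one $w_\bS(s)(\lambda)=\ord_F(\lambda(s))$ for the induced torus. Your formulation $\NerC{T}(\ri_F)=\pi(\NerC{S}(\ri_F))$ forces you to prove the stronger lifting statement that every $t\in\NerC{T}(\ri_F)$ has a preimage already lying in $\NerC{S}(\ri_F)$; this is true and your cohomological sketch points in the right direction (one adjusts a lift $s_0$ by an element of $\ker(\pi)(F)$ to kill $w_\bS(s_0)\in\ker(Y_I\to X_I)$, using that the Kottwitz map of the kernel torus surjects onto $W_I$), but it is genuinely more work, and you yourself flag it as the ``main obstacle'' and leave it at the level of a plan. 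In short: the paper buys itself out of exactly the delicate step you identify by relaxing the lifting target from $\NerC{S}(\ri_F)$ to $\bS(F)$ and imposing the component condition via the definable map $\beta$ instead. If you carry out your lemma carefully, your route works, but you should be aware that the quantifier in the paper's formula ranges over all of $\bS(F)$, not over $\NerC{S}(\ri_F)$, and that this is what makes the argument go through without the extra lemma. You should also add the initial reduction to $L=F$ via unramified descent, which simplifies the coinvariants appearing.
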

\begin{proof}
We first reduce to the case that $L = F$: if $\NerC{T}(\ri_L)$ is definable then so is
$\NerC{T}(\ri_F) = \NerC{T}(\ri_L)^\Gamma$, where the equality holds since N\'eron models commute with unramified base change.
So for the remainder of the proof we will assume that $E/F$ is totally ramified.

Now, the identity component $\NerC{T}(\ri_F)$ is the kernel of the map $w_\bT : \bT(F) \to X_I$ from
$\Ner{T}(\ri_F)$ to its component group  defined in \cite{kottwitz:isocrystals-2}*{\S 7} (see also \cite{bitan:11a}*{3.1}).

Our fixed choice of resolution $Y \to X$ yields an induced torus $\bR$ over $F$ with cocharacter lattice $Y$, together with a diagram
\[
\begin{tikzcd}
\bR(F) \rar{\alpha} \dar{w_{\bR}} \drar{\beta} & \bT(F) \rar \dar{w_{\bT}} & 1 \\
Y_I \rar & X_I \rar & 0
\end{tikzcd}
\]
as in \cite{kottwitz:isocrystals-2}*{(7.2.6)}.  The map $\bR(F) = (E^\times \otimes Y)^I \to (E^\times \otimes X)^I = \bT(F)$
is definable since it is induced by the fixed map $Y \to X$.  Since $\bR$ is induced,
$Y_I$ is torsion free and $w_\bR : \bR(F) \to Y_I = \Hom(X^\ast(\bR), \Z)$ is given by
$r \mapsto \left(\lambda \mapsto \ord_{F}(\lambda(r))\right)$ \cite{kottwitz:isocrystals-2}*{(7.2.3)}.
Therefore $w_\bR$ is definable, and so is the composition $\beta : \bR(F) \to X_I$ with the fixed map $Y_I \to X_I$.

We can now show that $\NerC{T}(\ri_F)$ is a definable subset of $\bT(F)$: we have $t \in \NerC{T}(\ri_F)$
if and only if $\exists r \in \bR(F)$ such that $\alpha(r) = t$ and $\beta(r) = 0$.
\end{proof}

\section{General reductive groups}\label{sec:groups}
Let $\bG$ be a connected reductive algebraic group defined over a local field $F$,
let $\ff$ be a facet in the building of $\bG$ over $F$, and let $x$ be in the interior of $\ff$. 
We denote by $G_\ff$ the maximal parahoric subgroup $\bG(F)_{x, 0}$ associated with this data
by Moy and Prasad.
In this section we construct a family of definable sets that specialize to the parahoric subgroup ${G}_{\ff}$
for all fields $F$ of sufficiently large residue characteristic. By this we mean that first one constructs a family of definable sets that specialize to the groups $\bG(F)$ as $\bG$ runs over a family of reductive groups with a given absolute root datum; and in this family one constructs a family of definable subsets that specialize to the maximal parahorics in the corresponding groups $\bG(F)$.
It was previously shown in \cite{cluckers-gordon-halupczok:14b} that for all \emph{positive} $r$ and all \emph{optimal} points $x$ in the building,
the Moy-Prasad filtration subgroups ${\bG}(F)_{x, r}$ are definable in this sense; here we fill in the missing case where
$r=0$.\footnote{When $r=0$, the group ${\bG}(F)_{x,0}$ depends only on the facet containing $x$,
so we no longer need to consider optimal points $x$.}

\subsection{Reductive groups as a family of definable sets}
We treat reductive groups in the definable setting as in \cite{gordon-hales:15a}.
In fact, our construction of algebraic tori above in \S \ref{sub:def_tori} is a special case of this construction.
In particular, we have the \emph{fixed choices} that include the absolute root datum $\Xi$ of $\bG$, the Galois action on the absolute root datum (which we suppress from the notation), and a finite set $\cF$ that encodes
the set of facets of an alcove in the building of $\bG$, as in \cite{gordon-hales:15a}*{\S 2.1}.

More specifically, we start with a fixed choice  of a finite group $\Gamma$ as above, and an absolute root datum
$\Xi$ (which includes the action of $\Gamma$). This fixed choice determines  a split connected reductive group $\bG^{\ast\ast}$ defined over $\Q$,
with $\bT^{\ast\ast}\subset \bB^{\ast\ast}$ a split maximal torus and a Borel subgroup, and an action of $\Gamma$  on the root datum of $\bG^{\ast\ast}$
with respect to $(\bB^{\ast\ast}, \bT^{\ast\ast})$.  This determines a definable set $Z_{\Xi}^\ast$ that specializes,
for each local field $F$ of sufficiently large residue characteristic (with the bound depending only on $\Xi$)
to the set of pairs $z^\ast=(E, \zeta^\ast)$, where $E$ is a field extension of $F$ with Galois group isomorphic to 
$\Gamma$ (via an enumerated isomorphism), and $\zeta^\ast$ is an enumerated cocycle  with values in the group of outer automorphisms of $\bG^{\ast\ast}(F)$
defining a quasi-split $F$-form
$\bG_{z^\ast}$ of $\bG^{\ast\ast}$. Further, there is a definable set $Z_{\Xi}\to Z_{\Xi}^\ast$ encoding the inner $F$-forms of $\bG_{z^\ast}$ that become isomorphic to $\bG_{z^\ast}$ over $E$, (see\cite{gordon-hales:15a}*{\S 2.2.2} for details).


We also make a \emph{fixed choice} of a set $\cF$, the so-called `parahoric indexing set', defined precisely as in \cite{gordon-hales:15a}*{\S 2.1}
(We shall not need the details of its definition here, apart from the fact that it can be made a fixed choice).
For each $\Xi$, there are finitely many possible sets $\cF$ that could arise as the parahoric indexing set of a
reductive group with absolute root datum $\Xi$.
Our \emph{set of fixed choices} is now the set of pairs $(\Xi, \cF)$, with $\Xi$ as in the previous paragraph.
The correspondence between the cohomological data defining a group $\bG$ over $F$ and the indexing set for its
parahoric subgroups is described in \cite{gross:parahorics}*{\S 7}. From this explicit description, one can see that
there is a definable condition on an element $z=(z^\ast, \zeta)\in Z_\Xi(F)$ that ensures that
the reductive group $\bG$ over $F$ determined by the cocycle $z$ has the parahoric indexing set $\cF$, i.e.,
that $\cF$ can be identified with the set of baricentres of facets in an alcove in the building of $\bG$ over $F$.
Let us denote the subset of $Z_\Xi$ defined by this condition by $Z_{\Xi, \cF}$.

In summary, we have the following
\begin{theorem}\label{thm:def_groups}(\cite{gordon-hales:15a}*{\S 2.2.2})
 For every fixed choice $(\Xi, \cF)$ there exists $M>0$, definable sets $Z_{\Xi}^\ast$,
 $Z_{\Xi, \cF}\subset Z_{\Xi}$,
 and a definable family
 $\cG \to Z_{\Xi}$ such that for every $F\in \loc_M$ 
 the following holds:
 \begin{enumerate}
 \item  for $z\in Z_\Xi(F)$, the set ${\cG_z}(F)$ is the set of $F$-points of a connected reductive group
$\bG_z$ with absolute root datum determined by $\Xi$, or empty;
\item For each $z^\ast\in Z_{\Xi}^\ast(F)$ there exists an element which we will denote by $(z^\ast, 1)$ in $Z_{\Xi}(F)$, such that $\bG_{(z^\ast, 1)}$ is quasi-split over $F$;
\item If $z\in Z_{\Xi, \cF}$, then the facets in the alcove in the building for $\bG_z$ over $F$ are in bijection with the set $\cF$.
\end{enumerate}
Moreover, for $F\in \loc_M$, every isomorphism class of $F$-groups with absolute root datum given by $\Xi$ that split over a tamely ramified extension with Galois group $\Gamma$
arises as a fiber $\cG_z$ for some $z\in Z_\Xi(F)$.
\end{theorem}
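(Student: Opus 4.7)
The plan is to extend the torus construction of Section \ref{sub:def_tori} by layering additional cohomological parameters on top of the field-extension data $\cE_\Gamma$: first an outer-automorphism cocycle specifying a quasi-split form, then an inner-twist cocycle specifying the final group. The guiding principle is identical to the torus case: realize each group as the twisted $\Gamma$-fixed points of an $F$-affine variety obtained by restriction of scalars from $E$, and verify that the twisted action is given by formulas polynomial in the coordinates of $\sigma_i$ and the cocycle parameters.

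For the quasi-split form, begin with the split Chevalley $\Z$-group $\bG^{\ast\ast}$ attached to $\Xi$; the fixed pinning promotes the $\Gamma$-action on the based root datum to an action on $\bG^{\ast\ast}$ by algebraic automorphisms. Given $(b_\bullet, c_\bullet, \sigma_\bullet) \in \cE_\Gamma(F)$ with its identification $E \cong F^m$, the variety $\bG^{\ast\ast}(E)$ sits inside an affine space over $F$. An enumerated outer cocycle $\zeta^\ast$, represented by a tuple of elements of the finite group $\mathrm{Aut}(\Xi)$, combines with the matrices $\sigma_i$ to give a twisted $\Gamma$-action on $\bG^{\ast\ast}(E)$ whose coordinate formulas are polynomial in the entries of $\sigma_i$; the quasi-split $F$-form satisfies $\bG_{z^\ast}(F) = \bG^{\ast\ast}(E)^{\Gamma}$ with respect to this twisted action, and is therefore definable. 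The cocycle identities on $\zeta^\ast$ are finite conjunctions of matrix equalities and so cut out $Z_\Xi^\ast$ definably; the distinguished lift $(z^\ast, 1)$ corresponds to the trivial inner cocycle, giving (2).

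Inner forms are handled analogously: parameterize $\zeta \in Z^1(\Gamma, \bG_{z^\ast}^{\ad}(E))$ by an $m$-tuple of elements of $\bG_{z^\ast}^{\ad}(E)$ subject to the cocycle relations, and define $\bG_z(F)$ as the $\zeta$-twisted $\Gamma$-fixed points of $\bG_{z^\ast}(E)$. This yields $Z_\Xi \to Z_\Xi^\ast$ and the family $\cG \to Z_\Xi$ satisfying (1). Universality follows from the standard bijection between inner $F$-forms of $\bG_{z^\ast}$ splitting over $E$ and $H^1(\Gamma, \bG_{z^\ast}^{\ad}(E))$; tameness ($p \nmid |\Gamma|$, guaranteed by $p > M$) ensures this bijection is uniform in $F$. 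For $Z_{\Xi, \cF}$, I would invoke Gross's explicit description \cite{gross:parahorics}*{\S 7} of the parahoric indexing set as a combinatorial invariant of the action of $\Gamma$ on the affine root system of $\bG^{\ast\ast}$, together with the Kottwitz invariant of the inner twist; since all of this input is encoded by the finite matrix data $(\sigma_i, \zeta^\ast, \zeta)$, the condition that the resulting indexing set equals $\cF$ is a Boolean combination of finitely many matrix identities, hence definable.

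The main obstacle is bookkeeping rather than any single step: one must simultaneously track the field extension, the outer form, and the inner form, verifying at each layer that the relevant Galois-theoretic and cohomological manipulations remain first-order in the Denef-Pas language with bounds uniform in $F \in \loc_M$. Tameness is essential both for the cohomological classification of inner forms to apply uniformly and for Gross's description of parahorics to remain valid.
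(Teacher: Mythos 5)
Note first that the paper does not actually prove Theorem \ref{thm:def_groups}: it is stated as a citation to \cite{gordon-hales:15a}*{\S 2.2.2}, and the paragraphs preceding the statement give only a summary of the construction of $Z_\Xi^\ast$, $Z_\Xi$, $Z_{\Xi,\cF}$ and $\cG$. Your reconstruction follows that summary essentially verbatim: start from the definable space $\cE_\Gamma$ of field extensions, twist the split Chevalley group $\bG^{\ast\ast}$ by an enumerated outer cocycle to obtain quasi-split forms and $Z_\Xi^\ast$, twist further by an inner cocycle valued in $\bG^{\ad}_{z^\ast}(E)$ to obtain $Z_\Xi$ and $\cG$, and invoke Gross's combinatorial description to cut out $Z_{\Xi,\cF}$. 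So the approach is the same.

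One point in your justification deserves correction, though. Your claim that the condition defining $Z_{\Xi,\cF}$ is ``a Boolean combination of finitely many matrix identities'' in the data $(\sigma_i, \zeta^\ast, \zeta)$ is not accurate: the inner cocycle $\zeta$ ranges over $\bG^{\ad}_{z^\ast}(E)^m$, a positive-dimensional variety rather than a finite set, and the Kottwitz invariant attached to $\zeta$ is not a polynomial equation in its coordinates. Computing it involves the Kottwitz homomorphism, which requires valuations of characters --- the same kind of data handled for tori by Proposition \ref{prop:NerCdefinable} --- and only the \emph{output} lands in a finite group. The condition is definable in the Denef-Pas language, but establishing this needs an argument that neither your proposal nor the paper's summary spells out (the paper simply says ``one can see'' and refers to \cite{gross:parahorics}*{\S 7}). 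Relatedly, tameness ($p \nmid \lvert\Gamma\rvert$) is not what makes the Galois-cohomological classification of inner forms by $H^1(\Gamma,\bG^{\ad}_{z^\ast}(E))$ ``uniform'': that classification is ordinary Galois descent and holds for every $p$. Tameness is what guarantees that $\cE_\Gamma(F)$ is nonempty for large residue characteristic and what limits the reach of the construction to tamely ramified groups, consistent with the ``Moreover'' clause of the theorem.
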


\subsection{Definability of maximal parahorics}
Our main result is that the parahoric subgroups associated with facets in the building of $\bG$ via Bruhat-Tits theory are definable.
More precisely, let $\bG$ be a tamely ramified, connected reductive group defined over a local field $F$.
Let $\ff$ be a facet in the building of $\bG$.  The next proposition shows that
then the parahoric $\bG(F)_\ff\subset \bG(F)$ arises in a definable family of definable sets.
In particular, the canonical parahoric of $\bG(F)$ defined in \cite{gross:97a} is definable.

In order to prove the proposition, we need to start with  more fixed choices: namely, we have to include both the fixed choices needed to define the group and its parahoric indexing set as above, and also the fixed choice of a resolution $Y$ of the co-character lattice $X$ of a maximally split maximal torus in that group (which is part of $\Xi$), as in 
\S \ref{sub:fixedchoices}.

\begin{prop}\label{prop:main}
 Let $\Xi=(X,\Phi, X^\vee, \Phi^\vee, \Gamma)$,  $\cF$, $Z_\Xi\to Z_\Xi^\ast$ and $\cG\to Z_\Xi$ be, respectively, an absolute root datum with Galois action, a parahoric indexing set, the space encoding the forms of the split group with the given absolute root datum, and the definable family of all groups with this absolute root datum, as in Theorem \ref{thm:def_groups}, and let $Y\to X$ be a surjective map of $\Gamma$-modules. 

Then for each $\ff \in \cF$, there exists $M>0$ (depending only on the fixed choices ($\Xi, \cF$) and $Y\to X$) and a family of definable 
subsets $\cG_{\ff} \to Z_{\Xi, \cF}$ of $\cG$
 such that, for all $F\in \loc_M$ and $z\in Z_{\Xi, \cF}(F)$, 
\[
(\cG_{\ff})_{z}(F)= \bG_z(F)_{\ff}.
\]
\end{prop}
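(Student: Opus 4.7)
The plan is to write $\bG_z(F)_{\ff}$ as an explicit finite union of products of definable subsets, using Bruhat--Tits structure theory, and to check that each ingredient varies definably with $z \in Z_{\Xi,\cF}(F)$. Given $z$, the construction of \S\ref{sub:def_tori} applied to the cocharacter lattice $X$ coming from $\Xi$ produces a maximally split maximal torus $\bT_z \subset \bG_z$, and the indexing datum $\ff \in \cF$ specifies a distinguished point $x$ in the interior of a facet of the building of $\bG_z$ over $F$. Since the resolution $Y \to X$ is part of the fixed choices, Proposition~\ref{prop:NerCdefinable} applies uniformly and exhibits $\NerC{T}_z(\ri_F)$ as a member of a definable family over $Z_{\Xi,\cF}$.

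By Bruhat--Tits theory in the tamely ramified setting, $\bG_z(F)_{\ff}=\bG_z(F)_{x,0}$ is generated by $\NerC{T}_z(\ri_F)$ together with the affine root subgroups $U_a(F)_{x,0}$ for affine roots $a$ with $a(x)\ge 0$, and it admits a finite union-of-products decomposition
\[
\bG_z(F)_{\ff} \;=\; \bigcup_{w \in W_\ff}\; U^-_\ff(F)\cdot \NerC{T}_z(\ri_F)\cdot w\cdot U^+_\ff(F),
\]
where $W_\ff \subset N_{\bG_z}(\bT_z)(F)$ is a finite set of representatives for the Weyl group of the reductive quotient at $\ff$, and $U^{\pm}_\ff(F)$ is the product, in a fixed order, of those $U_a(F)_{x,0}$ with $a$ positive, respectively negative, and non-negative at $x$. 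Each affine root subgroup $U_a(F)_{x,0}$ lives in a definable family by the results of \cite{cluckers-gordon-halupczok:14b}, which express it as the image of a definable valuation condition on a definable affine chart obtained from a pinning of $\bG^{**}$. The finite set $W_\ff$ can itself be selected definably inside the normalizer, since for a fixed $\ff$ the conditions pinning down a Weyl coset representative are first-order and $N_{\bG_z}(\bT_z)$ is definable. Substituting these pieces into the product formula produces a definable subset $\cG_\ff \subset \cG$ over $Z_{\Xi, \cF}$ which specializes correctly for all $F \in \loc_M$ with $M$ sufficiently large.

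The main obstacle is that ``the subgroup generated by a definable set'' is not itself a first-order operation; the way around this is the explicit product decomposition above, which trades generation for a finite union of ordered products, each factor of which is under uniform definable control. The tameness assumption enters at two points: it is what makes Proposition~\ref{prop:NerCdefinable} applicable to $\bT_z$, and it is what guarantees that the affine root data and the lifts of the Weyl group of the reductive quotient can be described by formulas in the Denef--Pas language uniformly in $z$. A secondary point to verify is that the multiplication map on each summand is sufficiently well-behaved that the membership condition ``$g = u^- \cdot t \cdot w \cdot u^+$'' becomes an honest existential formula over the definable factors; this is automatic once the ordering of the affine roots has been fixed across the family, which the fixed choice of $\Xi$ together with $\cF$ supplies.
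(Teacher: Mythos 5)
Your argument correctly identifies the two main ideas needed for the quasi-split case: use Proposition~\ref{prop:NerCdefinable} to get the torus part of the parahoric, and replace ``the subgroup generated by'' with an explicit finite union of ordered products, each factor of which is definable. This is essentially what the paper does for quasi-split $\bG_{z^\ast}$, where it cites Lemma~3.4 of \cite{cluckers-gordon-halupczok:14b} for the root-group contribution once the torus factor $\NerC{T^\ast}(\ri_F)$ is shown to be definable.

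However, there is a genuine gap in the step from the quasi-split case to general $z \in Z_{\Xi,\cF}$, and your proposal elides it. You assert that ``the construction of \S\ref{sub:def_tori} applied to the cocharacter lattice $X$ coming from $\Xi$ produces a maximally split maximal torus $\bT_z \subset \bG_z$.'' For a non-quasi-split inner form $\bG_z$, this is false as stated: the construction of \S\ref{sub:def_tori} produces a torus abstractly, but there is no definable embedding of it into $\bG_z$ as a maximally split maximal torus, and more importantly the \emph{relative} root system of $\bG_z$ over $F$ (which governs the affine root subgroups $U_a(F)_{x,0}$ entering your decomposition) differs from that of the quasi-split form. The definability of $U_a(F)_{x,0}$ via pinned affine charts of $\bG^{\ast\ast}$ that you invoke from \cite{cluckers-gordon-halupczok:14b} is available only relative to a maximal split torus whose root groups are visible from the split data; for a non-quasi-split inner form, the $F$-rational root groups can be larger unipotent groups not directly given by coordinates on $\bG^{\ast\ast}$. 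So the product decomposition you write down cannot be directly parameterized over all of $Z_{\Xi,\cF}$.

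The paper avoids this by first proving the result for quasi-split groups, and then observing that $\bG_z$ becomes isomorphic to its quasi-split form $\bG_{z^\ast}$ over the maximal unramified subextension $L/F$. Since $L/F$ is unramified, the building of $\bG_z$ over $F$ is the fixed-point set in the building over $L$, and the parahoric $\bG_z(F)_\ff$ is the set of fixed points of $(\bG_{z^\ast}(L))_\ff$ under the $\gal(L/F)$-action twisted by the definable inner twisting $\psi_z$. Taking Galois fixed points is a first-order operation, so definability follows. Your proposal would be repaired by inserting exactly this unramified descent step: prove your decomposition for the quasi-split form over $L$, then descend. A secondary point worth flagging in your write-up is the definable choice of Weyl-group lifts $W_\ff$ inside the normalizer; this is plausibly handled by the same Lemma~3.4 of \cite{cluckers-gordon-halupczok:14b} that the paper cites, but it should not be treated as automatic.
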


\begin{proof} Even though as above, one should start from the fixed choices, 
and then build a family of definable sets that specialize to the parahorics ${\cG_z(F)}_\ff$ for all fields $F$ of sufficiently large residue characteristic, we will just show how to construct the subgroups 
${\cG_z(F)}_\ff$ in a definable way pretending that $F$ and $z$ are fixed, in order not to clutter the discussion. It will be clear from the construction that this way we get a definable family of definable subsets as usual. 

First, consider the family of quasi-split groups $\bG_{z^\ast}$ parameterized by 
$z^\ast\in Z_{\Xi}^\ast$. 
Let $\bT^\ast$ be a maximal torus containing the maximal $F$-split torus in 
$\bG_{z^\ast}$ with co-character lattice isomorphic to $X$ (which is part of the fixed choices).  
Let $x$ be the baricentre of $\ff$.
By definition, $\bG(F)_{x,0}$ is generated by $\bT^\ast(F)_{x, 0}$ and $U_\psi$, where the $U_\psi$ are
the filtration subgroups of the unipotent one-parameter subgroups $U_{\alpha}$. 
We have shown in Proposition \ref{prop:NerCdefinable} above that  $\bT^\ast(F)_{x, 0}=\NerC{T^\ast}(F)$ is definable.
The rest of the proof in this case is identical to that of Lemma 3.4 in \cite{cluckers-gordon-halupczok:14b}. 
This proves the statement for quasi-split groups. 

Now suppose $z\in Z_\Xi$. The element $z$ in particular defines a tower of field extensions $E/L/F$ 
with $L/F$ a maximal unramified sub-extension of $E$, as in \S \ref{sub:def_tori}, a quasi-split group $\bG_{z^\ast}$,
 and a cocycle $\zeta$ that defines an inner twisting $\psi_z$ (over $F$) between the quasi-split form $\bG_{z^\ast}$ 
 and $\bG_z$, which is an $L$-isomorphism.  Note that $\psi_z$ is a definable map (using $z$ as a parameter). 
Let $\cF$ be such that $z\in Z_{\Xi, \cF}$, i.e., assume that the fixed choice $\cF$ provides the parahoric indexing 
set for $\bG_z$. Let $\ff$ be an element of $\cF$; we think of it as the baricentre of a facet in the building of 
$\bG_z$ over $F$. Since $L/F$ is unramified, the set of fixed points of the building of $\bG_z$ over $L$ 
under the $\gal(L/F)$-action is precisely the building of $\bG_z$ over $F$; in particular, we can view $\ff$ as a point 
in the building for $\bG_z$ over $L$, which coincides with the building of $\bG_{z^\ast}$ over $L$.  
As shown in the previous paragraph, $(\bG_{z^\ast}(L))_\ff$ is a definable set. 
Since $L/F$ is unramified, $\bG_{z}(F)_\ff$ is the set of fixed points under the action of $\gal(L/F)$ twisted by $\psi_z$ of the set  $(\bG_{z^\ast}(L))_\ff$, and thus it is definable. 
\end{proof} 

\subsection{Applications} 
As an immediate consequence of Proposition \ref{prop:main}, we obtain that the canonical measure is motivic, up to a motivic constant.
This statement was previously known for unramified reductive groups \cite{cluckers-hales-loeser}.
We recall that a \emph{motivic constant} is an element of the ring of constructible motivic functions on a point,
i.e., of $A:=\Z[\lef, \lef^{-1}, \frac{1}{1-\lef^{-i}}, i>0]$ where $\lef$ is a formal symbol which specializes to $q$.

For a connected quasi-split reductive group $\bG$ over a local field $F$, we write
$d\mu_{\bG(F)}^\can$ for the \emph{canonical Haar measure}
on $\bG(F)$, which assigns volume $1$ to the canonical parahoric. 
Note that this seems to be the standard definition of the canonical measure in all settings
where it is used to define global orbital integrals, but it differs from Gross' canonical measure
exactly by the $L$-factor of the motive associated with $\bG$ \cite{gross:97a}*{Prop. 4.7}.
For general $\bG$, using the same method as Gross, 
we define $d\mu_{\bG(F)}^\can$ as the pull-back of the canonical measure (in our sense)
from the quasi-split inner form $\bG^\ast$ of $\bG$.  

\begin{theorem}\label{thm:mot_meas}
 Let $\Xi$, $\cF$, $Z_\Xi$, $Z_{\Xi,\cF}$, $Z^\ast_\Xi$  and $\cG \to Z_\Xi$ be as in Theorem \ref{thm:def_groups}. 
Then there exists $M>0$ (depending only on the fixed choices), a family of  motivic measures
$d\mu_z^\mot$ on $\cG_z$, and a motivic function $c$ on $Z$ such that, for every $F\in \loc_M$,
\[
c_F(z) d\mu_{\bG_z(F)}^\can = d\mu_{z, F}^\mot.
\]
Here $d\mu_{\bG_z(F)}^\can$ is the canonical measure on $\bG_z(F)=\cG_z(F)$,
and $d\mu_{z, F}^\mot$ is the specialization to $F$ of the motivic measure $d\mu_z^\mot$ on the definable set $\cG_z$. 
\end{theorem}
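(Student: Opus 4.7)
The plan is to exhibit a motivic Haar measure on the family $\cG \to Z_\Xi$ and then compare it fiber-by-fiber with the canonical measure. First, I would construct a family of motivic measures $d\mu_z^\mot$ on $\cG$ that specialize, for each $F \in \loc_M$ and each $z \in Z_\Xi(F)$, to a left-invariant Haar measure on $\bG_z(F)$. This is essentially the standard gauge-form construction used in \cite{cluckers-gordon-halupczok:14b}: one fixes a translation-invariant top-degree differential form on the split group $\bG^{\ast\ast}$ coming from the root datum $\Xi$, transports it to $\bG_z$ using the cocycle data encoded in $z$, and observes that the transport varies definably in $z$. Applying the Cluckers--Loeser motivic measure associated with a definable top form then yields the desired family $d\mu_z^\mot$.

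Next, I would apply Proposition \ref{prop:main} to the particular element $\ff^\can \in \cF$ corresponding to Gross's canonical parahoric, which is determined entirely by the fixed choice $(\Xi, \cF)$. This gives a definable family $\cG_{\ff^\can} \to Z_{\Xi, \cF}$ whose fiber over $z$ is exactly the canonical parahoric of $\bG_z(F)$. By the fundamental theorem of motivic integration, integrating $1$ over this definable family against $d\mu^\mot$ produces a constructible motivic function
\[
c(z) := \int_{(\cG_{\ff^\can})_z} d\mu_z^\mot
\]
on $Z_{\Xi, \cF}$ whose specialization $c_F(z)$ is the motivic volume of the canonical parahoric in $\bG_z(F)$.

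Finally, for each $F$ and $z$, both $d\mu_{z,F}^\mot$ and $d\mu_{\bG_z(F)}^\can$ are Haar measures on the locally compact group $\bG_z(F)$, hence differ by a positive scalar. Comparing them on the canonical parahoric (canonical volume $1$, motivic volume $c_F(z)$) forces $c_F(z) \, d\mu_{\bG_z(F)}^\can = d\mu_{z,F}^\mot$, which is the required identity. The main obstacle of this theorem has been isolated in Proposition \ref{prop:main}: once the canonical parahoric is cut out by a single definable condition uniformly in the family, the rest is a routine uniqueness-of-Haar-measure argument combined with the definable integration calculus. The only subtlety worth checking is that $c_F(z) \neq 0$, which is immediate because the canonical parahoric is an open compact subgroup of $\bG_z(F)$ and $d\mu_{z,F}^\mot$ is a nonzero Haar measure; extending $c$ by zero outside $Z_{\Xi,\cF}$ yields a motivic function on all of $Z_\Xi$ as required by the statement.
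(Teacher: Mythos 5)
Your argument is correct for the quasi-split parameters $z^\ast = (z^\ast, 1)$, and there it coincides with the paper's construction of $c^\ast$: define the motivic Haar measure $d\mu^\mot$ on the family, cut out the canonical parahoric via Proposition \ref{prop:main}, and set $c$ equal to its motivic volume. But the proof has a genuine gap for $z = (z^\ast, \zeta)$ with $\zeta$ nontrivial, i.e.\ when $\bG_z$ is not quasi-split over $F$. At that step you write ``Comparing them on the canonical parahoric (canonical volume $1$, motivic volume $c_F(z)$),'' which presumes that the canonical measure $d\mu_{\bG_z(F)}^\can$ assigns volume $1$ to a canonical parahoric inside $\bG_z(F)$ itself. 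But the paper explicitly defines the canonical measure on a non-quasi-split group as the pull-back $\psi_{\zeta}^\ast\bigl(d\mu_{\bG_{z^\ast}(F)}^\can\bigr)$ of the canonical measure from the quasi-split inner form $\bG_{z^\ast}$ under the inner twisting $\psi_\zeta$, \emph{not} by normalizing any parahoric of $\bG_z(F)$. The volume that this pulled-back measure gives to a parahoric of $\bG_z(F)$ is in general not $1$, so your uniqueness-of-Haar-measure comparison is calibrated against the wrong normalization.

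The paper's proof resolves this by working on $Z^\ast$ first, obtaining $c^\ast$ exactly as you do, and then declaring $c(z) := c^\ast(z^\ast)$ and $d\mu_z^\mot := \psi_\zeta^\ast\bigl(c^\ast(z^\ast)\, d\mu_{\bG_{z^\ast}(F)}^\can\bigr)$. The desired identity $c_F(z)\, d\mu_{\bG_z(F)}^\can = d\mu_{z,F}^\mot$ then holds \emph{by definition}, and the remaining content is that $\psi_\zeta$ is a definable map (with $z^\ast, \zeta$ as parameters), so that pulling back a motivic measure along it stays motivic. To repair your argument you would need to replace the intrinsic normalization on $\bG_z$ with this pull-back structure; as written, your $c_F(z)$ measures the canonical parahoric of $\bG_z$ with respect to your intrinsically defined $d\mu_z^\mot$, and there is no reason that quantity equals the ratio of $d\mu_z^\mot$ to the pulled-back canonical measure.
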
 

\begin{proof} 
We first define the motivic function $c^\ast$ on $Z^\ast$ that is responsible for scaling of the measure on quasi-split groups, and then define the motivic function $c$ on $Z$ by pull-back. 
There exists a motivic Haar measure on $\bG_{z^\ast}$ for every $z^\ast\in Z^\ast(F)$,
constructed e.g. in \cite{gordon-hales:15a}*{\S 2.3}. Let us denote this measure by $\mu_{z^\ast}^\mot$. 
Since the group $\bG_{z^\ast}$ is quasi-split,   
it has the canonical parahoric subgroup as in \cite{gross:97a}, associated with a special point $x$ in the building. The equivalence class of $x$ has a representative $\ff\in \cF$, and by Proposition \ref{prop:main}, the canonical parahoric 
$\bG_{z^\ast}(F)_{x, 0}=\bG_{z^\ast}(F)_\ff$ is definable.  Thus, we can define  a motivic function $c^\ast(z^\ast):=
\vol_{\mu_{z^\ast}^\mot}({\cG_{\ff}}_{z^\ast})$. 
By definition, we have 
\[
c^\ast_F(z^\ast) d\mu_{\bG_{z^\ast}(F)}^\can = d\mu_{z^\ast, F}^\mot.
\]

Now if $z=(z^\ast, \zeta)\in Z(F)$, the canonical measure on $\bG_z(F)$ is by definition the pull-back of the canonical measure on $\bG_{z^\ast}(F)$ under the inner twisting $\psi_{\zeta}$ determined by $\zeta$. 
The inner twisting is a definable map
(using $\zeta$ and $z^\ast$ as parameters; cf. \cite{cluckers-gordon-halupczok:14b}*{\S 3.5.2}). 
The pull-back of a motivic measure under a definable map is motivic.
Thus we have, by definition, 
$$ 
d\mu_{\bG_{z}(F)}^\can  =  
\psi_{\zeta}^\ast \left(d\mu_{\bG_{z^\ast}(F)}^\can \right).$$ 
The measure $c^\ast_F(z^\ast)\psi_{\zeta}^\ast \left(d\mu_{\bG_{z^\ast}(F)}^\can \right)$ is motivic, as a pull-back of a motivic measure under a definable map. 
Thus, for $z=(z^\ast, \zeta)\in Z$, we can define $c(z):= c^\ast(z^\ast)$, i.e., 
the function $c$ is simply the pull-back of the function $z^\ast$ on $Z^\ast$ under the projection map $Z\to Z^\ast$. 
\end{proof}

\begin{rem} The motivic measure $d\mu_{z, F}^\mot$ on $\bG(F)$ that we defined above differs (by a motivic constant) from the 
motivic measure defined in \cite{gordon-hales:15a}*{\S 2.3} and  \cite{cluckers-gordon-halupczok:14b} in the case when $\bG_z$ is not quasi-split over $F$. 
This definition allows a number of improvements in \cite{cluckers-gordon-halupczok:14b} and 
\cite{shin-templier:15a}*{Appendix B}. 
Namely,  parts (1) and (2) of \cite{cluckers-gordon-halupczok:14b}*{Lemma 3.4} become unnecessary, and the statement in Part (3)
now includes $r=0$; instead of using the measures discussed in \S 3.5.2, one can use the canonical measure. 
More importantly, in \cite{shin-templier:15a}*{Appendix B, \S B.5.2}, Definition 14.13 and Lemma 14.14
become unnecessary, and in all calculations one can take $i_M=1$, which simplifies the rest of the proof.
\end{rem}

Finally, Theorem \ref{thm:mot_meas} implies that formal degree in definable families of
supercuspidal representations is a motivic function of the parameters indexing the family. 
Recall that formal degree of a representation depends on the choice of Haar measure on 
$\bG(F)/C$, where $C$ is the center of $\bG(F)$. 
We use the canonical measure on $\bG^\ad$ in order to keep the statement simple and easy to use. 
More precisely, note that the difference between $\bG$ and $\bG^\ad$ lies in the fixed choices; let $\Xi$ be the fixed choices for 
$\bG$ and  $\Xi^\ad$ for $\bG^{\ad}$,  respectively. 
Then there is a map of the corresponding cocycle spaces $Z\to Z_{\Xi^\ad}$ (identity on the part of the parameter $z$ that determines the splitting field, and with the map on  cocycles induced by
the map $\bG^{\ast\ast} \to (\bG^{\ast\ast})^{\ad}$). By abuse of notation, we will denote the image of $z\in Z_\Xi(F)$ in 
$Z_{\Xi^\ad}(F)$ also by $z$.  
By Theorem \ref{thm:mot_meas} applied to the adjoint group $\bG_z^{\ad}$, we have the function $c^\ad$ on $Z_{\Xi^\ad}$ such that the measure 
$c^\ad(z) \mu_z^{\can}$ on the adjoint group with the parameter $z$ is motivic. 
 
\begin{cor} \label{cor:form_deg} Let $\Xi$ 
and $\cG \to Z_\Xi$ be as in Theorem \ref{thm:def_groups}, and let $M>0$ be the constant from Theorem \ref{thm:mot_meas}.
Suppose that we are given a definable family of compact subgroups $J_{z, \lambda} \subset \cG_z$,
parameterized by $z \in Z_\Xi$ and $\lambda$ in some definable set.
In addition, suppose we are given an irreducible representation  
$\sigma_{z, \lambda}(F)$ of $J_{z, \lambda}(F)$ of fixed dimension $d$, for every $F\in \loc_M$, such 
that $\pi_{z,\lambda}(F) := \ind_{J_{z,\lambda}(F)}^{\bG_z(F)} \sigma_{z,\lambda}(F)$
is irreducible for all $z \in Z_\Xi(F)$ and $F \in \loc_M$.
Then the formal degree (with respect to the canonical measure as above) of $\pi_{z, \lambda}$ is of the form 
\[
d_{\pi_{z, \lambda}}=\frac{c^\ad(z)d}{m(z, \lambda)},
\]
where $c^\ad(z)$ is defined in the above paragraph, and 
$m(z, \lambda)$ is a motivic function of $z, \lambda$. 
\end{cor}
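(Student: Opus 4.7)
The plan is to combine the standard formula for the formal degree of a compactly induced representation with Theorem \ref{thm:mot_meas} applied to the adjoint group. For $\pi = \ind_J^G \sigma$ irreducible supercuspidal, with $\sigma$ of dimension $d$ on an open subgroup $J$ whose image $\bar J$ in $\bG^\ad_z(F)$ is compact, Schur orthogonality (combined with the Mackey-theoretic description of matrix coefficients of a compactly induced representation) gives
\[
d_{\pi} = \frac{d}{\vol_{\mu^\can_{\bG^\ad_z(F)}}(\bar J)},
\]
where $\mu^\can_{\bG^\ad_z(F)}$ is the canonical Haar measure on the adjoint group. This is the precise meaning of ``formal degree with respect to the canonical measure'' used in the corollary statement.

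Next, apply Theorem \ref{thm:mot_meas} to the definable family $\cG^\ad \to Z_{\Xi^\ad}$ to obtain the motivic function $c^\ad$ on $Z_{\Xi^\ad}$ (already introduced in the paragraph preceding the corollary) together with motivic measures $d\mu^\mot_z$ on $\cG^\ad_z$ satisfying
\[
c^\ad(z)\, d\mu^\can_{\bG^\ad_z(F)} = d\mu^\mot_{z, F}.
\]
The map $\bG_z \to \bG^\ad_z$ is definable, being induced by the fixed-choice homomorphism $\bG^{\ast\ast}\to (\bG^{\ast\ast})^\ad$, so the images $\bar J_{z, \lambda}$ form a definable family of compact subsets of $\cG^\ad$. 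By the defining property of a motivic measure (the integral of a motivic measure against the characteristic function of a definable set is a motivic function of the parameters),
\[
m(z, \lambda) := \vol_{\mu^\mot_z}(\bar J_{z, \lambda})
\]
is a motivic function of $(z, \lambda)$.

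Putting these together, $\vol_{\mu^\can}(\bar J_{z, \lambda}) = m(z, \lambda)/c^\ad(z)$, and substituting into the formal degree formula yields the desired expression
\[
d_{\pi_{z, \lambda}} = \frac{c^\ad(z)\, d}{m(z, \lambda)}.
\]
The step that requires the most care is the formal degree formula itself: one must track how the inducing subgroup $J_{z, \lambda} \subset \bG_z(F)$ is transported through the natural projection to $\bG^\ad_z(F)$, and verify that the Schur-orthogonality normalization matches that of the canonical Haar measure on $\bG^\ad_z(F)$ so that the formula $d_\pi = d/\vol(\bar J)$ is literally correct. Once this bookkeeping is in place, everything else is formal: Theorem \ref{thm:mot_meas} supplies the conversion between canonical and motivic measures, and definability of $J_{z, \lambda}$ together with the general theory of motivic integration gives motivicity of $m(z, \lambda)$.
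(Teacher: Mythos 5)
Your proposal is correct and takes essentially the same approach as the paper. The paper packages the Schur-orthogonality computation you sketch into a standalone lemma (Lemma~\ref{lem:well-known}, proved via matrix coefficients and credited to Loren Spice), writes the compact quotient as $J_{z,\lambda}/C$ rather than as your image $\bar J_{z,\lambda}$ in $\bG^\ad_z(F)$ (the same set under the usual identification), and then applies Theorem~\ref{thm:mot_meas} to the adjoint group exactly as you do, setting $m(z,\lambda)=\vol_{\mu^\mot_z}(J_{z,\lambda}/C)$.
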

\begin{proof}
It is well-known (see Lemma \ref{lem:well-known}) that
\begin{equation}\label{eq:well-known}
d_{\pi_{z, \lambda}}=\frac{\deg(\sigma_{z, \lambda})}{\vol_{\mu_{\bG_z^\ad(F)}^\can}(J_{z, \lambda}/C)}.
\end{equation}

By Theorem \ref{thm:mot_meas} applied to the adjoint group $\cG\to Z_{\Xi^\ad}$, we have 
\[
\vol_{\mu_{\bG_z^\ad(F)}^\can}(J_{z, \lambda}/C) = \frac{\vol_{\mu^\mot_z}(J_{z, \lambda})}{c^\ad(z)},
\]
where $\mu^\mot_z$ is the motivic measure, 
and the statement follows, with $m(z, \lambda)=\vol_{\mu^\mot_z}(J_{z, \lambda}/C)$.
\end{proof}

In the proof of Corollary \ref{cor:form_deg}, we used the following property of the formal degree.
This statement is well-known, but we could not find a reference, so we give a proof provided to us by Loren Spice.

\begin{lem} \label{lem:well-known}
If $G$ is a reductive $p$-adic group with center $C$, $\mu$ is a measure on $G/C$, $J \subseteq G$ is open, $J/C$ is compact,
$\sigma$ is an irreducible representation of $J$ and $\pi = \ind_J^G \sigma$ then the formal degree of $\pi$ with respect to $\mu$ is given by
\[
d_{\pi,\mu} = \frac{\deg(\sigma)}{\vol_\mu(J/C)}.
\]
\end{lem}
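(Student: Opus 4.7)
The plan is to deduce the formula from two applications of Schur orthogonality: one for the compact group $J/C$ and the finite-dimensional representation $\sigma$, and one for the square-integrable-modulo-center representation $\pi$ of $G$. The bridge between them is an explicit computation of a single matrix coefficient of $\pi$, exploiting the fact that $\pi$ is realized concretely as compact induction. Since $\pi$ is irreducible and arises as $\ind_J^G\sigma$ with $J/C$ compact, the matrix coefficient I shall use will be supported in $J$, and in particular in a set that is compact modulo $C$; together with irreducibility this guarantees that $\pi$ is square-integrable modulo $C$, so the Schur relations for $\pi$ do apply.

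Concretely, I would fix a unit vector $v_0 \in V_\sigma$ and consider the function $f_{v_0} \in V_\pi$ defined by $f_{v_0}(g) = \sigma(g) v_0$ for $g \in J$ and $f_{v_0}(g) = 0$ otherwise; this lies in the compactly induced space because its support projects to a single coset in $J\backslash G$. A direct calculation using the natural inner product $\langle f_1,f_2\rangle = \sum_{\bar g \in J\backslash G}\langle f_1(g), f_2(g)\rangle_{V_\sigma}$ shows that $\|f_{v_0}\|^2 = 1$, and that the matrix coefficient
\[
\phi(g) := \langle \pi(g) f_{v_0}, f_{v_0}\rangle
\]
equals $\langle \sigma(g) v_0, v_0\rangle$ when $g \in J$ and vanishes for $g \notin J$, since in the defining sum only the coset of the identity contributes and only when $g\in J$.

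With $\phi$ in hand, I would apply Schur orthogonality in two ways. First, since $\pi$ is square-integrable modulo $C$, the relations for $\pi$ with respect to $\mu$ give
\[
\int_{G/C} |\phi(g)|^2 \, d\mu(g) = \frac{\|f_{v_0}\|^4}{d_{\pi,\mu}} = \frac{1}{d_{\pi,\mu}}.
\]
Second, because $\sigma$ has a unitary central character, its matrix coefficients descend to the compact quotient $J/C$, and the classical Schur orthogonality on the compact group $J/C$ (with the measure obtained by restricting $\mu$, which is legitimate as $J/C$ is open in $G/C$) yields
\[
\int_{J/C} |\langle \sigma(g)v_0, v_0\rangle|^2 \, d\mu(g) = \frac{\vol_\mu(J/C)}{\deg(\sigma)}.
\]
Since $\phi$ is supported on $J$ and the two integrals above are equal, combining them gives $d_{\pi,\mu} = \deg(\sigma)/\vol_\mu(J/C)$. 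The main point requiring care is the justification that $\pi$ belongs to the discrete series modulo $C$ so that Schur orthogonality is available; this is where irreducibility of $\pi$ enters, together with the observation that $\phi$ itself is supported in a set compact modulo $C$, hence square-integrable on $G/C$. The remainder is bookkeeping with the induced inner product and the consistent normalization of $\mu$ on $G/C$ and $J/C$.
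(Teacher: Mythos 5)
Your proposal is correct and follows essentially the same route as the paper's proof: both fix a unit vector $v_0$ in the space of $\sigma$, form the matrix coefficient of $\pi$ supported on $J$ that restricts to the corresponding matrix coefficient of $\sigma$, and then play Schur orthogonality on the compact group $J/C$ against Schur orthogonality for the discrete-series (mod center) representation $\pi$. The paper's write-up is terser and leaves implicit both the identification of $f(j) = (\sigma(j)v,v)$ as a matrix coefficient of $\pi$ and the justification that $\pi$ is square-integrable modulo center; you make these points explicit, which is a welcome clarification rather than a departure in strategy.
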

\begin{proof}
Let $V$ be the representation space of $\sigma$ and $(,)$ be a $\sigma$-invariant
inner product on $V$, which exists since $J$ is compact.
Choose a unit-length vector $v \in V$ and let $f$ be the function supported on $J$
defined there by $f(j) = (\sigma(j)v,v)$.  For the normalized Haar measure $dj$ on $J/C$,
\[
\int_{J/C} \lvert f(j)\rvert^2 dj = \frac{(v,v)}{\deg(\sigma)} = \frac{1}{d},
\]
and thus
\[
\frac{f(1)}{d_\pi} = \int_{G/C} \lvert f(g) \rvert^2 d\mu(g) = \frac{\vol_\mu(J/C)}{d}.
\]
\end{proof}

\subsubsection{Gross' canonical volume form}
We conclude with a remark about the canonical volume form and the motive of a reductive group. 
Here assume that $\bG$ is quasi-split. In  \cite{gross:97a}*{\S 4}, Gross defined the canonical volume form
$\omega_G$ on $G=\bG(F)$ associated with the canonical smooth model 
$\underline{G}^0$ of $G$ whose set of $\ri_F$-points is the canonical maximal parahoric subgroup. 
One can ask if this volume form itself gives rise to a motivic measure. 
A priori, the smooth group scheme $\underline{G}^0$ is defined over $\ri_F$, and thus it is not clear why the
associated volume form can be defined \emph{uniformly in $F$} using the Denef-Pas language.
However, we observe that the motive $M$ associated with $\bG$ can be determined directly from the fixed choices defining $\bG$,
since $\bG$ is quasi-split.
By \cite{gross:97a}*{Proposition 4.7}, the volume of the canonical parahoric with respect to $|\omega_G|$
is $L(M^\vee(1))$, which is a motivic constant.
In fact, $L(M^\vee(1))$ agrees with our motivic constant $c(z)$ when $z$ defines a split group, since
the motivic volume form we use to define the motivic measure coincides with $\omega_G$ in this case.
In the case that $\bG_z$ is not split, the question of whether $c(z) = L(M^\vee(1))$  is left open.

\bibliography{roebib/Biblio}


\end{document}